\documentclass[11pt,psamsfonts]{article}
\usepackage{amsmath,amssymb,latexsym}
\usepackage{amscd}
\usepackage[mathscr]{eucal}
\usepackage[english]{babel}
\usepackage[all]{xy}

\newcommand\qed{{\hspace*{\fill}Q.E.D.\vskip12pt plus 1pt}}

\newcommand\Pic[1]{\hbox{\rm Pic(}#1\hbox{\rm )}}

\def\Coker{\operatorname{Coker}}

\def\rank{\operatorname{rank}}

\def\lim{\operatorname{lim}}

\def\det{\operatorname{det}}
\def\codim{\operatorname{codim}}

\def\Pic{\operatorname{Pic}}

\def\Hom{\operatorname{Hom}}
\def\Ext{\operatorname{Ext}}

\def\id{\operatorname{id}}
\def\rank{\operatorname{rank}}
\def\Ker{\operatorname{Ker}}

\def\Coker{\operatorname{Coker}}

\def\calHom{{\mathscr H}om}
\def\calExt{{\mathscr E}xt}

\def\Hom{\operatorname{Hom}}
\def\Ext{\operatorname{Ext}}
\def\Pic{\operatorname{Pic}}
\def\det{\operatorname{det}}
\def\rank{\operatorname{rank}}

\newcommand\proof{\noindent{\em Proof.}\ \ }

\newtheorem{theorem}{Theorem}[section]

\newtheorem{corollary}[theorem]{Corollary}

\newtheorem{prop}[theorem]{Proposition}

\newtheorem{question}[theorem]{Question}

\newtheorem{definition}[theorem]{Definition}
\newtheorem{rem}[theorem]{Remark}

\newtheorem{pargrph}[theorem]{}
\newtheorem{examp}[theorem]{Example}

\newtheorem{MM}[theorem]{ }
\newtheorem{res}[theorem]{Remarks}

\textwidth15cm
\textheight22.5cm
\hoffset=-14mm
\voffset=-13mm
\makeatletter
\ifnum\@ptsize=0 \addtolength{\hoffset}{-0.3cm} \fi
\ifnum\@ptsize=2 \addtolength{\hoffset}{0.5cm} \fi
\sloppy
\renewcommand{\qed}{\hfill $\square$}

\newenvironment{rem*}{\begin{rem}\em}{\end{rem}}
\newenvironment{rems*}{\begin{res}\em}{\end{res}}
\newenvironment{example*}{\begin{examp}\em}{\end{examp}}
\newenvironment{definition*}{\begin{definition}\em}{\end{definition}}
\newenvironment{question*}{\begin{question}\em}{\end{question}}
\newenvironment{MM*}{\begin{MM}\em}{\end{MM}}

\newenvironment{prgrph*}[1]{\indent\begin{pargrph}{\bf #1.}\em\
}{\end{pargrph}}

\begin{document}
\title{{Infinitesimal extensions of rank two vector bundles\\on submanifolds of small codimension\footnote{2000
{\em Mathematics Subject Classification}. 14M07, 14M10,  14F17.\newline
\indent{{\em Keywords and phrases.}} First order infinitesimal extensions, splitting of the normal bundle sequence, subvarieties of small 
codimension.}}
}

\author{Lucian B\u adescu\footnote{{\em Author's address.} Universit\`a degli Studi di Genova, Dipartimento di Matematica,
Via Dodecaneso $35$, $16146$ Genova, Italy. Email: badescu@dima.unige.it}}

\date{}
\maketitle

%\centerline{\small\small To Vasile Br\^{\i}nz\u anescu on the occasion of his seventieth birthday anniversary}

\medskip\medskip

\begin{abstract}\noindent {\small\small Let $X$ be a submanifold of dimension $n$ of the complex projective space 
$\mathbb P^N$ ($n<N$), and let $E$ be a vector bundle of rank two on $X$ . If
$n\geq\frac{N+3}{2}\geq 4$ we prove a geometric criterion for the existence of an 
extension of $E$ to a vector bundle on the first order infinitesimal
neighborhood of $X$ in $\mathbb P^N$ in terms of the splitting of the 
normal bundle sequence of $Y\subset X\subset\mathbb P^N$, where 
$Y$ is the zero locus of a general section of a high twist of $E$. In the last section we show that
the universal quotient vector bundle  on the Grassmann
variety $\mathbb G(k,m)$ of $k$-dimensional linear subspaces of $\mathbb P^m$, with $m\geq 3$ and
$1\leq k\leq m-2$ (i.e. with $\mathbb G(k,m)$  not a projective space), embedded in any projective 
space $\mathbb P^N$, does not  extend to the first infinitesimal neighborhood of $\mathbb G(k,m)$ 
in $\mathbb P^N$ as a vector bundle.} \end{abstract}

\section*{Introduction}

Let $X$ be a submanifold of dimension $n$ of a complex projective manifold $P$ of dimension $N$, with $n<N$. For every $i\geq 0$ denote by $X(i)$ the $i$-th infinitesimal neighborhood of $X$ in $P$,  i.e. the subscheme of $P$ defined by the sheaf of ideals $\mathscr I_X^{i+1}$, where $\mathscr I_X$ is the sheaf of ideals  of $X$ in $\mathscr O_P$. Note that $X(0)=X$. Fix an $i\geq 0$; if $E$ is a vector bundle of rank $r$ on $X(i)$, a natural problem is to give criteria for the extendability of $E$ to the next infinitesimal neighborhood $X(i+1)$ as a vector bundle. The following general fundamental result  was proved by Grothendieck in 1960 (see \cite[\'Expos\'e III, Proposition 7.1, Page 85]{SGA1}):

\medskip

\noindent{\bf Theorem (Grothendieck)} {\em Under the above hypotheses and notation, assume that
\begin{equation}\label{grot} H^2(X,E\otimes E^{\vee}\otimes {\bf S}^{i+1}(N^\vee_{X|P}))=0,\end{equation}
where for every $j\geq 1$, ${\bf S}^j(N^\vee_{X|P})=\mathscr I^j_X/\mathscr I_X^{j+1}$
is the $j$-th symmetric power of the conormal bundle  $N^\vee_{X|P}=\mathscr I_X/\mathscr I_X^2$ of $X$ in $P$. Then $E$ can be extended to a vector bundle $\mathscr E$ on $X(i+1)$. If moreover $H^1(X,E\otimes E^{\vee}\otimes {\bf S}^{i+1}(N^\vee_{X|P}))=0$ then this extension is also unique up to isomorphism.}

\medskip

If in Grothendieck's theorem  above  $X$ is a curve and $E$  a vector bundle on $X$ then the vanishing \eqref{grot} is automatically fulfilled,  so that  $E$ can be extended to a vector bundle $\mathscr E_i$ on $X(i)$ for every $i\geq 1$. Note also that the vanishing \eqref{grot} is only a sufficient condition for the extendability of the vector bundle $E$ in Grothendieck's theorem. 

The aim  of this paper is twofold. First we prove, in the spirit of the paper \cite{EGPS} of Ellingsrud, Gruson, Peskine and Str\o mme, a necessary and sufficient geometric criterion  for extending a vector bundle $E$ of rank two on $X$ to a vector bundle $\mathscr E$ on the first infinitesimal neighborhood $X(1)$ of $X$ in $P$, when $P$ is the $N$-dimensional complex projective space $\mathbb P^N$ and $X$ is a submanifold of small codimension in 
$P=\mathbb P^N$, but without assuming the vanishing \eqref{grot} for $i=0$. In this paper  ``small codimension'' will mean that the inequalities $n\geq\frac{N+3}{2}\geq 4$ are satisfied. For example if $n=4$,  $X$ is a smooth hypersurface in $\mathbb P^5$, and if $n=5$, $X$ is either a smooth hypersurface in $\mathbb P^6$, or a $2$-codimensional submanifold in $\mathbb P^7$, and so on. We prove that a
vector bundle $E$ of rank two on $X$ can be extended to a vector bundle on $X(1)$ if and only if $E$ satisfies the condition
$({\bf P}_E^2)$ stated at the beginning of Section 2 (see Theorem \ref{a} below for the 
precise formulation). This condition involves the splitting of the canonical exact sequence of normal bundles
$$0\to N_{Y|X}=E(l)|_Y\to N_{Y|\mathbb P^N}\to N_{X|\mathbb P^N}|_Y\to 0,$$
where $Y$ is the zero locus of a general section of $E(l)$ for $l\gg 0$. This is done by first interpreting the splitting of the above exact sequence of normal bundles (via a generalization of a key lemma of \cite{EGPS} given in \cite{Rep}), and then by using a generalized form of the Hartshorne--Serre correspondence (Theorem \ref{serre-locale} below, whose proof was written jointly with E. Arrondo).
The second aim of this paper is to prove Theorem \ref{c} below, which asserts that the universal quotient vector bundle  of the Grassmann variety $\mathbb G(k,m)$ of linear subspaces of dimension $k$ in $\mathbb P^m$ (with $1\leq k\leq m-2$) never extends as a vector bundle to the first infinitesimal neighborhood of $\mathbb G(k,m)$ with respect to any projective embedding of $\mathbb G(k,m)$. 

The paper is organized as follows. In Section 1 we recall some known results needed in the next sections. In Section 2 we prove Theorem \ref{a} and in Section 3, Theorem \ref{c}.
\medskip

As a motivation of this paper, let me first recall the following beautiful result:

\medskip 

\noindent{\bf Theorem (Griffiths-Harris \cite{GrHa}, cf. also \cite{HaHu}, cf. also \cite{EGPS}}) 
{\em  Let $X$ be a smooth projective complex surface embedded in $\mathbb P^n$
$(n\geq 3)$ as a complete intersection. Let $Y$ be a smooth connected curve in $X$
such that the canonical exact sequence of normal bundles
$$0\to N_{Y|X}\to N_{Y|\mathbb P^n}\to N_{X|\mathbb P^n}|_Y\to 0$$
splits. Then there is a hypersurface $H$ of $\mathbb P^n$ such that
$Y=X\cap H$ $($scheme-theoretically$)$.}

\medskip\medskip

The crucial step of the (short and very elegant) proof of this result given in \cite{EGPS} is to show that the normal bundle $N_{Y|X}$ of $Y$ on $X$ can be extended to a line bundle of the first infinitesimal $X(1)$ of $X$ in $\mathbb P^n$. Instead, the proofs of \cite{GrHa} and \cite{HaHu} make use of the theory of infinitesimal variation of Hodge structures. The proof of Theorem \ref{a} below (which also involves the splitting of certain canonical exact sequences of normal bundles) makes use of Grothendieck-Lefschetz theory  plus a generalized form of Hartshorne-Serre correspondence (Theorem \ref{serre-locale} below) in order to extend certain rank two vector bundles on a small-codimensional submanifold $X$ of  $\mathbb P^n$ to rank two vector bundles on the first infinitesimal neighborhood $X(1)$ of $X$ in $\mathbb P^n$.

\medskip\medskip 

{\em Unless otherwise stated, throughout this paper we shall use the standard terminology and notation. All the algebraic varieties or schemes considered  are defined over the field $\mathbb C$ of complex numbers.}
\medskip

{\bf Acknowledgment.} I am grateful to Giorgio Ottaviani for having explained to me how the vanishing \eqref{d0} in Section 3 (needed to conclude the proof of Theorem \ref{c})  is a consequence of 
a general result of Ottaviani-Rubei (see \cite[Theorem 6.11] {Ott}). I also want to thank the referee for suggesting some improvements of the presentation and for providing a list of typos.

\section{Background material}

In this section we recall some known results that will be used in the next two sections.

\begin{prop}[Bertini--Serre, see \cite{F}, Appendix B9]\label{t1} Let $E$ be a vector bundle
of rank $r$ on an algebraic variety $X$ over $k$. Assume that $V$ is
a finite dimensional $k$-vector subspace of $H^0(X,E)$ whose sections generate
$E$. Then there is a non-empty Zariski open
subset $V_0$ of $V$ such that $\codim_XZ(s)\geq\min\{r,\dim X+1\}$ for every $s\in V_0$,
where $Z(s)$ denotes the zero locus of $s$ $($in particular, $Z(s)=\varnothing$  if $r>\dim X)$.\end{prop}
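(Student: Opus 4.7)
The plan is the standard incidence-variety argument. I form the ``universal zero locus''
$$ I = \{(x,s)\in X\times V : s(x)=0\} \subset X\times V, $$
together with its two projections $p_1:I\to X$ and $p_2:I\to V$. Because the sections in $V$ generate $E$, the evaluation map $V\otimes_k\mathcal{O}_X\to E$ is a surjection of vector bundles on $X$ whose kernel $\mathcal{K}$ is a vector bundle of rank $\dim V-r$; the subscheme $I$ is precisely the total space of $\mathcal{K}$ inside $X\times V$. Thus $p_1$ exhibits $I$ as a vector bundle over $X$, so every irreducible component of $I$ has dimension at most $\dim X+\dim V-r$ (with equality when $X$ is equidimensional).

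I would then apply the theorem on the dimension of fibers componentwise to $p_2:I\to V$. For each irreducible component $I_j\subset I$, either $p_2|_{I_j}$ is dominant, in which case Chevalley's upper semicontinuity produces a non-empty Zariski open subset of $V$ over which the fiber of $p_2|_{I_j}$ has dimension $\dim I_j-\dim V\le \dim X-r$; or $p_2(I_j)$ is not dense in $V$, and the fiber of $p_2|_{I_j}$ is empty over the open complement of $\overline{p_2(I_j)}$. Intersecting the finitely many non-empty opens so produced gives the desired $V_0\subset V$.

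For $s\in V_0$ the fiber $p_2^{-1}(s)$ equals $Z(s)$ set-theoretically, and it is the union of the fibers over $s$ of the various $p_2|_{I_j}$. If $r\le\dim X$, each such fiber has dimension at most $\dim X-r$, so $\codim_X Z(s)\ge r$. If $r>\dim X$, then $\dim I_j\le \dim X+\dim V-r<\dim V$ for every $j$, forcing the second alternative for every component, so $Z(s)=\varnothing$ for $s\in V_0$. Both conclusions are summarised by $\codim_X Z(s)\ge \min\{r,\dim X+1\}$. I foresee no real obstacle; the only minor subtlety is allowing $X$ to be reducible, which is handled by the componentwise bookkeeping above.
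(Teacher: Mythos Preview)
Your argument is correct and is precisely the standard incidence-variety proof of this Bertini--Serre lemma. Note, however, that the paper does not give its own proof of this statement: it is listed in Section~1 as background material with a reference to Fulton's \emph{Intersection Theory}, Appendix~B.9, where essentially the same argument you wrote appears. So there is nothing to compare against beyond observing that your proof matches the cited source.
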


\begin{theorem}[Kodaira--Le Potier vanishing theorem \cite{LP}]\label{t2} Let $E$ be an ample vector bundle of rank $r$ on a smooth projective $n$-dimensional variety. Then $H^i(X,E^{\vee})=0$ for every 
 $i\leq n-r$, where $E^{\vee}$ is the dual of $E$.\end{theorem}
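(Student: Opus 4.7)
The plan is to reduce this statement to Kodaira--Nakano vanishing applied on the projectivization of $E$, following Le~Potier's original strategy. By Serre duality on the smooth projective $n$-fold $X$, the vanishing $H^i(X,E^{\vee})=0$ for $i\leq n-r$ is equivalent to $H^j(X,E\otimes\omega_X)=0$ for every $j\geq r$, and I would work with this dual formulation throughout.

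To establish the dual form, I would consider the projective bundle $\pi\colon Y=\mathbb P(E)\to X$, of dimension $n+r-1$, together with its tautological line bundle $\mathcal{O}_Y(1)$; since $E$ is ample on $X$, the line bundle $\mathcal{O}_Y(1)$ is ample on $Y$. Because $\pi_*\mathcal{O}_Y(1)=E$ and $R^q\pi_*\mathcal{O}_Y(1)=0$ for $q>0$, the projection formula together with the Leray spectral sequence yields an isomorphism $H^j(X,E\otimes\omega_X)\cong H^j(Y,\mathcal{O}_Y(1)\otimes\pi^*\omega_X)$. The relative Euler sequence produces the canonical formula $\omega_{Y/X}=\pi^*(\det E)\otimes\mathcal{O}_Y(-r)$, and the relative cotangent sequence $0\to\pi^*\Omega_X\to\Omega_Y\to\Omega_{Y/X}\to 0$ induces a filtration on $\Omega^p_Y$ whose graded pieces are of the form $\pi^*\Omega^a_X\otimes\Omega^b_{Y/X}$ with $a+b=p$.

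Next, apply Kodaira--Nakano vanishing on $Y$, namely $H^q(Y,\Omega^p_Y\otimes\mathcal{O}_Y(1))=0$ whenever $p+q>\dim Y=n+r-1$, and then read off what this implies for the sheaf $\mathcal{O}_Y(1)\otimes\pi^*\omega_X$ via the above filtration: using Bott's cohomology formulas for $\Omega^b_{\mathbb P^{r-1}}$ along the fibers of $\pi$, one identifies $E\otimes\omega_X$ as a specific push-down of a graded piece of $\Omega^{n+r-1}_Y\otimes\mathcal{O}_Y(1)$, and the resulting spectral-sequence convergence produces exactly the bound $j\geq r$.

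The main obstacle I foresee is the combinatorial bookkeeping in this last step: one must verify that, among all graded pieces contributing to $H^j(Y,\mathcal{O}_Y(1)\otimes\pi^*\omega_X)$ via Leray, only the ones coming from $\Omega^n_X\otimes\Omega^{r-1}_{Y/X}$ survive and that every other term either vanishes by Kodaira--Nakano on $Y$ or by the fiberwise Bott vanishing. This is precisely Le~Potier's original calculation in \cite{LP}, and I would follow that template rather than attempt to reinvent it.
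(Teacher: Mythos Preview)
The paper does not give its own proof of this statement: Theorem~\ref{t2} appears in Section~1 (``Background material'') purely as a citation of Le~Potier's result \cite{LP}, with no argument supplied. Your outline is a faithful sketch of Le~Potier's original method---reduction to Kodaira--Nakano on $\mathbb P(E)$ via the filtration of $\Omega^p_{\mathbb P(E)}$ and the fiberwise Bott vanishing---so there is nothing to compare; you are simply reconstructing the proof the paper defers to the reference.
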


\begin{theorem}[Sommese \cite{S}]\label{t3} Let $E$ be an ample vector bundle of rank $r$ on a smooth projective $n$-dimensional variety such that $n-r\geq 2$. Let $s\in H^0(X,E)$ be a global section. Then the  zero locus $Y:=Z(s)$ is connected and nonempty of dimension $\geq n-r$. Assume moreover that $Y$ is smooth and
 $\dim Y=n-r$. Then the canonical restriction map  $\Pic(X)\to\Pic(Y)$ of Picard groups is an isomorphism if $n-r\geq 3$, and injective with torsion-free cokernel if $n-r=2$.\end{theorem}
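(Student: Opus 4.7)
The approach is to reduce the assertion to the classical Grothendieck--Lefschetz theorem for an ample divisor by passing to the projective bundle of $E$. Let $\pi\colon\mathbb{P}=\mathbb{P}(E)\to X$ be the Grothendieck projectivization (so that $\pi_*\mathcal{O}_{\mathbb{P}}(1)=E$), and set $\xi=\mathcal{O}_{\mathbb{P}}(1)$. Ampleness of $E$ amounts to ampleness of $\xi$, and the section $s\in H^0(X,E)=H^0(\mathbb{P},\xi)$ cuts out an ample effective divisor $\widetilde Y\subset\mathbb{P}$ whose fiber over $x\in X$ is a hyperplane in $\mathbb{P}(E_x)\cong\mathbb{P}^{r-1}$ when $s(x)\neq 0$, and the full fiber $\mathbb{P}(E_x)$ when $s(x)=0$. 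Thus $\pi|_{\widetilde Y}$ is a $\mathbb{P}^{r-2}$-bundle over $X\setminus Y$ and contains the full $\mathbb{P}^{r-1}$-bundle $\pi^{-1}(Y)\to Y$ over $Y$; in particular $\dim\widetilde Y=n+r-2$ and $\dim\pi^{-1}(Y)=n-1$.

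The connectedness and dimension claims for $Y$ then follow from properties of $\widetilde Y$. As an ample effective divisor on the smooth projective variety $\mathbb{P}$, the scheme $\widetilde Y$ is connected (from the short exact sequence $0\to\xi^{-1}\to\mathcal{O}_{\mathbb{P}}\to\mathcal{O}_{\widetilde Y}\to 0$ together with the Kodaira vanishing $H^i(\mathbb{P},\xi^{-1})=0$ for $i<n+r-1$, a special case of Theorem \ref{t2}). A dimension count on the jumping locus of $\pi|_{\widetilde Y}$ (or a direct Koszul-complex argument) shows $Y\neq\emptyset$ and $\dim Y\geq n-r$, and the connectedness of $Y$ transports from that of $\widetilde Y$ through the connected $\mathbb{P}^{r-1}$-fibers of $\pi^{-1}(Y)\to Y$ (alternatively via a Fulton--Hansen-type connectedness theorem for zero loci of ample bundles).

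For the Picard group assertion, Grothendieck--Lefschetz applied to the ample effective divisor $\widetilde Y\subset\mathbb{P}$ yields that $\Pic(\mathbb{P})\to\Pic(\widetilde Y)$ is an isomorphism when $\dim\widetilde Y=n+r-2\geq 3$ and injective with torsion-free cokernel when $\dim\widetilde Y=2$. Combining this with the projective-bundle formulae $\Pic(\mathbb{P})=\pi^*\Pic(X)\oplus\mathbb{Z}\xi$ and $\Pic(\pi^{-1}(Y))=\rho^*\Pic(Y)\oplus\mathbb{Z}\xi|_{\pi^{-1}(Y)}$ (where $\rho=\pi|_{\pi^{-1}(Y)}$), the further restriction $\Pic(\widetilde Y)\to\Pic(\pi^{-1}(Y))$ sends the class $\pi^*L$ to $\rho^*(L|_Y)$, so that kernel and cokernel of the composite $\Pic(X)\to\Pic(\pi^{-1}(Y))$ coincide with those of the desired restriction $\Pic(X)\to\Pic(Y)$ (using injectivity of $\rho^*$). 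The main obstacle is exactly this last translation: one must show that the $\pi^*\Pic(X)$-summand of $\Pic(\widetilde Y)$ injects (respectively surjects) onto $\rho^*\Pic(Y)$ under restriction, and then match thresholds so that the Lefschetz dichotomy $\dim\widetilde Y\geq 3$ versus $\dim\widetilde Y=2$ passes cleanly to the dichotomy $n-r\geq 3$ versus $n-r=2$ on $Y$. This is delicate because the structure of $\widetilde Y\to X$ degenerates along $Y$, with the fiber dimension jumping from $r-2$ to $r-1$, and in the case $r>2$ the exceptional locus $\pi^{-1}(Y)$ has codimension $r-1>1$ in $\widetilde Y$ rather than being a divisor, so the descent is not a second direct application of Lefschetz but requires carefully bookkeeping the $\xi$-direction.
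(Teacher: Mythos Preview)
The paper does not give a proof of this statement: Theorem~\ref{t3} is listed in Section~1 as background material, with a citation to Sommese~\cite{S}, and is used as a black box in the proof of Theorem~\ref{a}. So there is no ``paper's own proof'' to compare against.

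That said, your proposal is not a complete proof. The projectivization trick is indeed Sommese's starting point, but you stop precisely at the hard step and say so yourself: the last paragraph identifies ``the main obstacle'' (passing from $\Pic(\widetilde Y)$ back down to $\Pic(Y)$, and matching the threshold $\dim\widetilde Y\ge 3$ to the correct threshold $n-r\ge 3$) and then describes what \emph{must} be shown rather than showing it. As you observe, $\pi^{-1}(Y)$ has codimension $r-1$ in $\widetilde Y$, so a second application of Lefschetz for divisors is unavailable when $r>2$; and even for $r=2$, where $\widetilde Y\to X$ is the blow-up along $Y$, the isomorphism $\Pic(\mathbb P(E))\cong\Pic(\widetilde Y)$ only recovers the tautology $\Pic(X)\oplus\mathbb Z\cong\Pic(X)\oplus\mathbb Z$ and says nothing about $\Pic(Y)$. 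Sommese's actual argument goes through the relative homotopy groups: one shows that the inclusion $Y\hookrightarrow X$ is $(n-r)$-connected (this is where the correct threshold appears), by comparing the fibration pairs $(\mathbb P(E),\pi^{-1}(Y))\to(X,Y)$ and using Lefschetz on $(\mathbb P(E),\widetilde Y)$ together with a deformation retraction argument; the Picard statement then follows from the exponential sequence and a Koszul/Le~Potier computation of $H^i(\mathcal O_X)\to H^i(\mathcal O_Y)$.

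There is also a gap earlier: connectedness of $\widetilde Y$ does not transport to $Y$ in the way you claim. The fibers of $\pi^{-1}(Y)\to Y$ being connected would give ``$\pi^{-1}(Y)$ connected $\Rightarrow$ $Y$ connected'', but you have only shown $\widetilde Y$ connected, and $\pi^{-1}(Y)$ is a proper closed subset of $\widetilde Y$ (the complement is a $\mathbb P^{r-2}$-bundle over $X\setminus Y$, which can connect components). The parenthetical appeal to a ``Fulton--Hansen-type connectedness theorem for zero loci of ample bundles'' is essentially the statement you are trying to prove. The clean route is the Koszul resolution of $\mathcal O_Y$ together with Le~Potier vanishing (Theorem~\ref{t2}) to get $H^0(\mathcal O_X)\xrightarrow{\sim}H^0(\mathcal O_Y)$ once $Y$ is known to have the expected codimension; non-emptiness itself uses positivity of $c_r(E)$ for ample $E$.
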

 
\begin{theorem}[Barth--Larsen \cite{Lr}]\label{t4} Let $X$ be a smooth closed subvariety of dimension $n$ of $\mathbb P^N$. Then the canonical restriction map $\Pic(\mathbb P^N)\to\Pic(X)$ is an isomorphism if $n\geq\frac{N+2}{2}$ and is injective with torsion-free cokernel if $n=\frac{N+1}{2}$.\end{theorem}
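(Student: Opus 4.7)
The plan is to reduce the theorem to a topological assertion via the exponential sequence and Hodge theory, with the heavy lifting done by Barth's topological connectedness theorem.

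\textbf{Step 1.} I would first invoke the topological Barth--Larsen theorem: for a smooth subvariety $X\subset\mathbb P^N$ of dimension $n$, the restriction map $H^i(\mathbb P^N,\mathbb Z)\to H^i(X,\mathbb Z)$ is an isomorphism for $i\leq 2n-N$ and is injective with torsion-free cokernel for $i=2n-N+1$. The standard proof applies Lefschetz-type arguments inside the product $\mathbb P^N\times\mathbb P^N$ via the Segre embedding (the ``Barth doubling trick''); I would take this as a black box. Specialising: if $n\geq\frac{N+2}{2}$ the degree-$2$ restriction is an isomorphism, while if $n=\frac{N+1}{2}$ the degree-$1$ restriction is already an isomorphism (so $H^1(X,\mathbb Z)=0$) and the degree-$2$ restriction is injective with torsion-free cokernel.

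\textbf{Step 2.} From $H^1(X,\mathbb Z)=0$ and the Hodge decomposition $H^1(X,\mathbb C)=H^{1,0}(X)\oplus H^{0,1}(X)$ I would conclude $H^1(X,\sO_X)=0$. The exponential exact sequence then becomes
$$0\to\Pic(X)\xrightarrow{c_1}H^2(X,\mathbb Z)\to H^2(X,\sO_X),$$
so the first Chern class $c_1$ embeds $\Pic(X)$ as the group of integral Hodge $(1,1)$-classes in $H^2(X,\mathbb Z)$.

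\textbf{Step 3.} I would conclude case by case. For $n\geq\frac{N+2}{2}$, Step 1 identifies $H^2(X,\mathbb Z)$ with $H^2(\mathbb P^N,\mathbb Z)=\mathbb Z\cdot h$, so $c_1(\sO_X(1))=h|_X$ generates $H^2(X,\mathbb Z)$; since $\Pic(X)\hookrightarrow\mathbb Z$ already contains the generator $\sO_X(1)$, it must coincide with $\mathbb Z\cdot\sO_X(1)$, which is exactly the image of $\Pic(\mathbb P^N)$, giving the isomorphism. For $n=\frac{N+1}{2}$, the restriction $\Pic(\mathbb P^N)=\mathbb Z\to\Pic(X)$ is injective because $c_1(\sO_X(1))\neq 0$ (as $\sO_X(1)$ is ample) and $c_1$ is injective on $\Pic(X)$; via $c_1$ the cokernel embeds into $H^2(X,\mathbb Z)/\mathbb Z\cdot h|_X$, which equals the cokernel of $H^2(\mathbb P^N,\mathbb Z)\to H^2(X,\mathbb Z)$ and is therefore torsion-free by Step 1.

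The principal obstacle will be Step 1: the topological Barth--Larsen theorem itself is nontrivial and requires either the doubling construction or an inductive affine-Lefschetz argument (a naive hyperplane Lefschetz only yields the complete-intersection case). Once that input is granted, the remaining steps amount to routine bookkeeping with the exponential sequence.
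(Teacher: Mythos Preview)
The paper does not prove this statement at all: Theorem~\ref{t4} appears in Section~1 (``Background material'') as one of several known results quoted from the literature, with the attribution to Larsen~\cite{Lr} serving in lieu of a proof. So there is no argument in the paper to compare your proposal against.

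That said, your outline is the standard derivation of the Picard-group statement from the topological Barth--Larsen theorem, and it is correct. The only point worth flagging is that you explicitly acknowledge taking the topological input (Step~1) as a black box; this is indeed where all the content lies, and it is precisely what Larsen proves in~\cite{Lr}. Steps~2 and~3 are routine and accurate: in both ranges $n\geq\frac{N+2}{2}$ and $n=\frac{N+1}{2}$ one has $2n-N\geq 1$, so $H^1(X,\mathbb Z)=0$ and hence $H^1(X,\sO_X)=0$ by Hodge theory, making $c_1$ injective; your case analysis in Step~3 then goes through as written.
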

 
\begin{theorem}[Van de Ven \cite{VdV}]\label{t5} Let $X$ be a smooth closed subvariety of dimension $\geq 1$ of $\mathbb P^N$. Then the canonical exact sequence of tangent bundles
$$0\to T_X\to T_{\mathbb P^N}|_X\to N_{X|\mathbb P^N}\to 0$$
splits if and only if $X$ is a linear subspace of $\mathbb P^N$.\end{theorem}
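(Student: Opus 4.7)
The ``if'' direction is elementary: when $X\cong\mathbb P^n$ is a linear subspace of $\mathbb P^N$, one checks directly from the two Euler sequences that $T_{\mathbb P^N}|_X\cong T_{\mathbb P^n}\oplus\mathcal O_{\mathbb P^n}(1)^{\oplus(N-n)}$ and $N_{X|\mathbb P^N}\cong\mathcal O_{\mathbb P^n}(1)^{\oplus(N-n)}$, which gives the splitting at once. For the non-trivial converse, the plan is to combine the assumed splitting with the Euler sequence on $\mathbb P^N$ to show that the twisted tangent bundle $T_X(-1)$ is globally generated, and then to invoke Wahl's characterization of projective space.

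First I would restrict the Euler sequence on $\mathbb P^N$ to $X$ and twist by $\mathcal O_X(-1)$, obtaining
$$0\to\mathcal O_X(-1)\to\mathcal O_X^{\oplus(N+1)}\to T_{\mathbb P^N}(-1)|_X\to 0.$$
In particular $T_{\mathbb P^N}(-1)|_X$ is generated by the images of the $N+1$ constant sections. The hypothesized splitting provides a retraction $T_{\mathbb P^N}|_X\twoheadrightarrow T_X$; twisting it by $\mathcal O_X(-1)$ and composing with the surjection above yields a surjection $\mathcal O_X^{\oplus(N+1)}\twoheadrightarrow T_X(-1)$. Hence $T_X(-1)$ is globally generated, and since $\dim X\geq 1$ we obtain $H^0(X,T_X(-1))\neq 0$.

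At this point I would invoke Wahl's theorem: if $X$ is a smooth projective variety of dimension $n\geq 1$ carrying an ample line bundle $L$ with $H^0(X,T_X\otimes L^{-1})\neq 0$, then $(X,L)\cong(\mathbb P^n,\mathcal O_{\mathbb P^n}(1))$. Applied to $L=\mathcal O_X(1)$, which is ample because $X$ is closed in $\mathbb P^N$, this forces $X\cong\mathbb P^n$ with the hyperplane class of $\mathbb P^N$ restricting to the hyperplane class of $\mathbb P^n$. The inclusion $X\hookrightarrow\mathbb P^N$ is therefore induced by a linear subsystem of $|\mathcal O_{\mathbb P^n}(1)|$, i.e., $X$ is a linear subspace of $\mathbb P^N$.

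The substantive ingredient is Wahl's theorem, which recognizes projective space from the positivity encoded in a single non-zero section of $T_X(-1)$; everything specific to this setting is just the short manipulation with the Euler sequence. A proof avoiding Wahl would have to exploit the global generation of $T_X(-1)$ in a more geometric fashion---e.g.\ through the rational curves on $X$ produced by such sections, or a direct Mori-theoretic argument---which is considerably more delicate than the line of attack above.
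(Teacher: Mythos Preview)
The paper does not actually prove Theorem~\ref{t5}; it is listed in Section~1 as background material and attributed to Van de Ven's 1958 note \cite{VdV}, with no argument given. So there is nothing in the paper to compare your proof against.

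That said, your argument is correct. The reduction via the restricted Euler sequence to the global generation of $T_X(-1)$ is clean, and Wahl's theorem then finishes the job. One historical remark worth making: Van de Ven's original proof predates Wahl's 1983 characterization of $\mathbb P^n$ by a quarter century, so his argument is necessarily different (more in the spirit of classical projective differential geometry). The paper also cites Musta\c t\u a--Popa \cite{MP} for a further proof, which proceeds via the observation---used in the proof of Theorem~\ref{c}---that the splitting of the tangent sequence is equivalent to the existence of a retraction $X(1)\to X$. Your route through Wahl is arguably the quickest modern approach, at the cost of importing a result that is itself nontrivial; the student's own closing paragraph already acknowledges this trade-off.
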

  
\begin{theorem}[\cite{EGPS}, \cite{Br} if $\codim_XY=1$ and \cite{Rep} if $\codim_XY>1$]\label{t6} Let $P$, $X$ and $Y$ be three smooth projective irreducible varieties such that $Y\subsetneq X\subsetneq P$ and $\dim Y\geq 1$. Set $r:=\codim_XY$. Then the canonical exact sequence of normal bundles 
$$0\to N_{Y|X}\to N_{Y|P}\to N_{X|P}|_Y\to 0$$
splits if and only if there exists a closed subscheme $Y'$ of the first infinitesimal neighborhood $X(1)$ of $X$ in $P$ such that $Y'$ is a local complete intersection of codimension $r$ in $X(1)$ and $Y'\cap X=Y$ $($scheme theoretically in $X(1)$, i.e. $\mathscr I_{Y'}+\mathscr I_X=\mathscr I_Y$, where 
$\mathscr I_{Y'}$, $\mathscr I_X$ and $\mathscr I_Y$ are the ideal sheaves of $\,Y'$, $X$ and $Y$ in $\mathscr O_{X(1)}$ respectively$)$.\end{theorem}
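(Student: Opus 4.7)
The plan is to treat both implications by direct manipulation of ideal sheaves in $\sO_P$, writing $\sI_X\subset\sI_Y$ for the ideals of $X$ and $Y$ in $P$, so that $X(1)$ is cut out by $\sI_X^2$. The smoothness hypotheses make the conormal sequence
$$0\to N^\vee_{X|P}|_Y=\sI_X/\sI_X\sI_Y\to N^\vee_{Y|P}=\sI_Y/\sI_Y^2\to N^\vee_{Y|X}=\sI_Y/(\sI_X+\sI_Y^2)\to 0$$
short exact, and I will argue the dual splitting statement throughout.

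For $(\Leftarrow)$, given $Y'$ I consider the chain $Y\subset Y'\subset P$ and the induced surjection of conormal sheaves $N^\vee_{Y|P}\twoheadrightarrow N^\vee_{Y|Y'}$. Since $\sI_X^2\subset\sI_{Y'}$, the ideal $\sI_Y/\sI_{Y'}$ of $Y$ in $Y'$ has square zero, hence equals $N^\vee_{Y|Y'}$. Using $\sI_{Y'}+\sI_X=\sI_Y$, the natural map $\sI_X\to\sI_Y/\sI_{Y'}$ is surjective with kernel $\sI_X\cap\sI_{Y'}$. A local computation invoking the lci hypothesis identifies $\sI_X\cap\sI_{Y'}$ with $\sI_X\sI_Y$: writing locally $\sI_{Y'}=\sI_X^2+(h_1,\ldots,h_r)$ where the reductions $\bar h_i$ form a regular sequence in $\sO_X$ defining $Y\subset X$, one applies the Koszul relations on the $\bar h_i$'s to show any element of $\sI_X\cap\sI_{Y'}$ can be rewritten as a combination in $\sI_X\cdot(h_i)+\sI_X^2\subset\sI_X\sI_Y$. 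It follows that $N^\vee_{Y|Y'}\cong\sI_X/\sI_X\sI_Y=N^\vee_{X|P}|_Y$, and the composition
$$N^\vee_{X|P}|_Y\hookrightarrow N^\vee_{Y|P}\twoheadrightarrow N^\vee_{Y|Y'}\cong N^\vee_{X|P}|_Y$$
is the identity, giving the desired retraction and hence the splitting.

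For $(\Rightarrow)$, I use the splitting $\sigma:N^\vee_{Y|X}\hookrightarrow N^\vee_{Y|P}$ to construct $\sI_{Y'}$. Let $\sF:=\sigma(N^\vee_{Y|X})\subset\sI_Y/\sI_Y^2$, a rank-$r$ subbundle. On an affine open $U$ with generators $f_1,\ldots,f_{N-n}$ of $\sI_X|_U$ and lifts $\tilde g_1,\ldots,\tilde g_r\in\sI_Y(U)$ of a local frame of $\sF|_U$, set $\sI_{Y'}(U):=\sI_X^2(U)+(\tilde g_1,\ldots,\tilde g_r)$. By Nakayama applied to $\sI_{Y/X}$, $\sI_{Y'}(U)+\sI_X(U)=\sI_Y(U)$. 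The images of the $\tilde g_i$ in $\sO_{X(1)}(U)$ form a regular sequence because their reductions in $\sO_X$ do ($Y$ is lci of codimension $r$ in $X$) and the extension $\sO_{X(1)}\to\sO_X$ has locally free, square-zero kernel, so regular sequences lift across it. The crucial verification is that the ideal $\sI_{Y'}(U)/\sI_X^2(U)$ depends only on $\sF$ and not on the lifts $\tilde g_i$: two lifts differ by an element of $\sI_Y^2(U)$, and modulo $\sI_X^2$ such a difference is a combination of products $f_i\tilde g_j$ and $\tilde g_i\tilde g_j$, both of which lie in the ideal $(\tilde g_1,\ldots,\tilde g_r)+\sI_X^2$. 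Hence the local ideals glue to a global $\sI_{Y'}\subset\sO_P$ with the required properties.

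The main obstacle is the gluing step in $(\Rightarrow)$: without a global splitting the subbundle $\sF\subset\sI_Y/\sI_Y^2$ is not defined, and the local ideals do not match on overlaps. Identifying $\sF$ as the precise datum that controls both the patching of local ideals and the lifting from $\sI_Y/\sI_Y^2$ back to $\sI_Y$ modulo $\sI_X^2$ is the heart of the argument, and it is this interpretation of splittings that simultaneously explains the converse direction.
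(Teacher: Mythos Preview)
The paper does not supply its own proof of this statement: Theorem~\ref{t6} is listed in Section~1 (``Background material'') as a known result quoted from \cite{EGPS}, \cite{Br} (codimension one) and \cite{Rep} (higher codimension), and no argument is given beyond the remark that the local complete intersection property of $Y'$ is the essential part of the conclusion. So there is no in-paper proof to compare against.

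That said, your sketch follows the strategy of the cited references and is essentially correct. In $(\Leftarrow)$, your identification $\sI_X\cap\sI_{Y'}=\sI_X\sI_Y$ via the Koszul syzygies on the regular sequence $\bar h_1,\dots,\bar h_r$ is right (the antisymmetry $d_{ij}=-d_{ji}$ makes the quadratic terms cancel), and the resulting isomorphism $N^\vee_{Y|Y'}\cong N^\vee_{X|P}|_Y$ is exactly the fact the paper later invokes from \cite[Remark~1.2~ii)]{Rep} in the proof of Theorem~\ref{a}. In $(\Rightarrow)$, the gluing step is the heart of the matter and you handle it correctly: since locally $\sI_Y=\sI_X+(\tilde g_1,\dots,\tilde g_r)$, one has $\sI_Y^2\subset\sI_X^2+(\tilde g_1,\dots,\tilde g_r)$, so the ideal depends only on the subbundle $\sF\subset\sI_Y/\sI_Y^2$ and not on the chosen lifts or frame. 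The lifting of the regular-sequence property across the square-zero extension $\sO_{X(1)}\to\sO_X$ with locally free kernel $N^\vee_{X|P}$ is a standard induction, as you indicate.
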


 Note  that the fact that $Y'$ is a local complete intersection in $X(1)$ of codimension $r$ is the essential part of the conclusion in Theorem \ref{t6}.

\section{Infinitesimal extensions of rank two vector bundles }

In this section we shall prove a geometric criterion for the extendability of a vector bundle $E$ of rank $2$ on a small-codimensional submanifold $X$ of $\mathbb P^N$ to a vector bundle $\mathscr E$ on $X(1)$ (Theorem \ref{a} below). 
We start (more generally) with a submanifold $X$ of $\mathbb P^N$ of dimension $n$ and with a vector bundle  $E$  a rank $r$ on $X$, with 
$1\leq r\leq n-1$. Then consider the following condition on the triple $(\mathbb P^N,X,E)$:
\begin{enumerate}
\item[$({\bf P}_E^r)$] {\em There exists an integer $l_0>0$  such that for every $l\geq l_0$ there exists a section $s=s_l\in H^0(E(l))$
whose zero locus $Y:=Z(s)$ is a smooth $r$-codimensional subvariety of $X$ such that the following canonical exact sequence of normal bundles
\begin{equation}\label{normali}
0\to N_{Y|X}=E(l)|_Y\to N_{Y|\mathbb P^N}\to N_{X|\mathbb P^N}|_Y\to 0
\end{equation}}splits.\end{enumerate}

\begin{prop}\label{a0} With the above notation, let $E$ be a vector bundle of rank $r$, with $1\leq r\leq n-1$,  on an $n$-dimensional submanifold
$X\subset\mathbb P^N$. If there exists a vector bundle $\mathscr E$ on $X(1)$ which extends $E$ then there exists an integer $l_0>0$ such that for every $l\geq l_0$ and for every section $s\in H^0(E(l))$
whose zero locus $Y$ is smooth $r$-codimensional in $X$, the exact sequence \eqref{normali} splits. In particular, condition $({\bf P}_E^r)$ above holds true.
\end{prop}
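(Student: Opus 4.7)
My plan is to use Theorem \ref{t6}: it is enough to produce, for a given section $s$ as in the statement, a closed subscheme $Y'\subset X(1)$ that is a local complete intersection of codimension $r$ with $Y'\cap X=Y$. The natural candidate is $Y':=Z(\tilde s)$, where $\tilde s$ is a global lift of $s$ to the bundle $\mathscr E(l):=\mathscr E\otimes\mathscr O_{X(1)}(l)$, which extends $E(l)$ to $X(1)$.

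For the lifting I would tensor the square-zero extension
\begin{equation*}
0\to N^\vee_{X|\mathbb P^N}\to \mathscr O_{X(1)}\to \mathscr O_X\to 0
\end{equation*}
with $\mathscr E(l)$. Since $\mathscr I_X$ annihilates $N^\vee_{X|\mathbb P^N}=\mathscr I_X/\mathscr I_X^2$, the left-hand term simplifies to $N^\vee_{X|\mathbb P^N}\otimes_{\mathscr O_X}E(l)$, yielding the short exact sequence $0\to N^\vee_{X|\mathbb P^N}\otimes E(l)\to \mathscr E(l)\to E(l)\to 0$. By Serre vanishing there is an integer $l_0>0$ such that $H^1(X,N^\vee_{X|\mathbb P^N}\otimes E(l))=0$ for all $l\geq l_0$, so that for every such $l$ the restriction map $H^0(X(1),\mathscr E(l))\to H^0(X,E(l))$ is surjective and every $s\in H^0(X,E(l))$ admits a lift $\tilde s\in H^0(X(1),\mathscr E(l))$.

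Fix such $l$ and $s$, choose a lift $\tilde s$, and set $Y':=Z(\tilde s)\subset X(1)$. Because $\tilde s|_X=s$, one has $\mathscr I_{Y'}+\mathscr I_X=\mathscr I_Y$ in $\mathscr O_{X(1)}$, so $Y'\cap X=Y$ scheme-theoretically. The main step is to verify that $Y'$ is a local complete intersection of codimension $r$ in $X(1)$. Since the fibres of $\mathscr E(l)$ and $E(l)$ at any point of $X$ coincide, we have $|Y'|=|Y|$ as topological spaces; as $Y$ is smooth and irreducible of dimension $n-r$, at each closed point $y\in Y'$ we obtain $\dim\mathscr O_{X(1),y}=n$ and $\dim\mathscr O_{Y',y}=n-r$. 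In a local trivialisation of $\mathscr E(l)$ around $y$, $\tilde s=(f_1,\ldots,f_r)\in\mathscr O_{X(1)}^r$, and $(f_1,\ldots,f_r)$ generates an ideal of height $r$ in $\mathscr O_{X(1),y}$.

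To finish I would invoke the Cohen--Macaulayness of $\mathscr O_{X(1),y}$: the depth lemma applied to $0\to N^\vee_{X|\mathbb P^N}\to \mathscr O_{X(1)}\to \mathscr O_X\to 0$, together with the fact that both $N^\vee_{X|\mathbb P^N}$ and $\mathscr O_X$ are annihilated by $\mathscr I_X$ and are therefore Cohen--Macaulay of depth $n$ as $\mathscr O_{X(1),y}$-modules (via the identification with their depth over $\mathscr O_{X,y}$; here $N^\vee_{X|\mathbb P^N}$ is locally free because $X$ is smooth), gives $\mathrm{depth}\,\mathscr O_{X(1),y}\geq n=\dim\mathscr O_{X(1),y}$. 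In a Cohen--Macaulay local ring, an ideal of height $r$ generated by $r$ elements is generated by a regular sequence; hence $Y'$ is a local complete intersection of codimension $r$ in $X(1)$, and Theorem \ref{t6} yields the splitting of \eqref{normali}. The ``in particular'' clause about condition $({\bf P}_E^r)$ then follows from the existence, for $l\gg 0$, of sections with smooth $r$-codimensional zero loci, which is a standard consequence of Bertini applied to the globally generated bundle $E(l)$. The main obstacle is the codimension verification for $Y'$, which rests on the identification $|Y'|=|Y|$ together with the Cohen--Macaulayness of $X(1)$; the rest of the argument is cohomological lifting and a direct appeal to Theorem \ref{t6}.
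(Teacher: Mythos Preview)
Your proof is correct and follows essentially the same strategy as the paper: lift $s$ to a section $\tilde s$ of $\mathscr E(l)$ via Serre vanishing, set $Y'=Z(\tilde s)$, and invoke Theorem~\ref{t6}. The paper simply asserts that $Y'$ is a local complete intersection of codimension $r$ in $X(1)$, whereas you supply the details (the identification $|Y'|=|Y|$ and the Cohen--Macaulayness of $\mathscr O_{X(1),y}$ via the depth lemma). Two very minor expository remarks: your claim that $Y$ is irreducible is not part of the hypothesis and is not needed---pure dimension $n-r$ suffices; and your assertion that $(f_1,\dots,f_r)$ has height $r$ is cleanest to state \emph{after} establishing Cohen--Macaulayness (or, equivalently, argue directly that in a CM local ring of dimension $n$, any $r$ elements cutting down the dimension to $n-r$ form a regular sequence).
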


\begin{proof} Consider the exact sequence
$$0\to F\to\mathscr E\to \mathscr E|_{X}=E\to 0,$$
where $F:=\Ker(\mathscr E\to E)$. 
Since by a well-known theorem of Serre $H^1(X(1),F(l))=0$ for $l\gg 0$,  the map 
$H^0(X(1),\mathscr E(l))\to H^0(X,E(l))$ 
is surjective for $l\gg 0$. Moreover, enlarging $l$ enough, we can also assume that the vector bundle $E(l)$ is ample and generated by its global sections. Let $s\in H^0(X,E(l))$ be a global section whose zero locus 
$Y:=Z(s)$ is smooth and $(n-r)$-dimensional (indeed, since $E(l)$ is generated by its global sections, by Proposition \ref{t1}  a general section of $E(l)$ satisfies this condition). Moreover, by Theorem \ref{t3},  $Y$ is also connected, and hence irreducible because $Y$ is smooth. Then the section $s$ lifts to a global section $s'\in H^0(X(1),\mathscr E(l))$. If $Y'$ denotes the zero locus of $s'$ it follows that  $Y'\cap X=Y$ (scheme-theoretic intersection in $X(1)$). Moreover, $Y'$ is a local complete intersection of codimension $r$ in $X(1)$. Then by Theorem \ref{t6} above we conclude that the exact sequence \eqref{normali} splits.
\qed\end{proof}

To prove the main result of this section we need the following generalization of the so-called Hartshorne--Serre correspondence:

\begin{theorem}[Generalized Hartshorne--Serre correspondence] \label{serre-locale}
Let $\mathscr X$ be an arbitrary irreducible algebraic scheme  $($not necessarily reduced$)$ over a  field $k$, and let $\mathscr Y\subset \mathscr X$  be a local complete
intersection subscheme of $\mathscr X$ of codimension two. Assume that the determinant of
the normal bundle $N_{\mathscr Y|\mathscr X}$ of
$\mathscr Y$ in $\mathscr X$ extends to a line bundle $L$ on $\mathscr X$ such that
$H^2(\mathscr X,L^{-1})=0$. Then there exists a vector bundle $\mathscr E$ of rank two
on $\mathscr X$ and a global section $t\in H^0(\mathscr X,\mathscr E)$ such that $\det(\mathscr E)=L$ and
$Z(t)=\mathscr Y$, i.e. the zero locus of $t$ is $\mathscr Y$ $($scheme-theoretically$)$.
If moreover $H^1(\mathscr X,L^{-1})=0$ then the pair $(\mathscr E,t)$ is also unique up to isomorphism.
\end{theorem}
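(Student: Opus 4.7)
The plan is to adapt the classical Serre construction to the (possibly non-reduced) setting of $\mathscr X$: build an extension
$$0\to L^{-1}\to \mathscr F\to\mathscr I_{\mathscr Y}\to 0$$
with $\mathscr F$ locally free of rank two, and then take $\mathscr E:=\mathscr F\otimes L$, so that the inclusion $\mathscr O_{\mathscr X}\hookrightarrow\mathscr E$ provides a global section $t$ whose scheme-theoretic zero locus is $\mathscr Y$ and with $\det\mathscr E=L$.

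First I would analyze the local $\calExt$ sheaves. Because $\mathscr Y$ is an LCI of codimension two in $\mathscr X$, locally $\mathscr I_{\mathscr Y}=(f_1,f_2)$ with $(f_1,f_2)$ a regular sequence in $\mathscr O_{\mathscr X}$ (irrespective of reducedness), giving the local Koszul resolution $0\to\mathscr O_{\mathscr X}\to\mathscr O_{\mathscr X}^2\to\mathscr I_{\mathscr Y}\to 0$. Combined with $0\to\mathscr I_{\mathscr Y}\to\mathscr O_{\mathscr X}\to\mathscr O_{\mathscr Y}\to 0$, this yields
$$\calExt^1(\mathscr I_{\mathscr Y},L^{-1})\cong\calExt^2(\mathscr O_{\mathscr Y},L^{-1})\cong L^{-1}|_{\mathscr Y}\otimes\det N_{\mathscr Y|\mathscr X}\cong\mathscr O_{\mathscr Y},$$
where the last step uses the hypothesis $\det N_{\mathscr Y|\mathscr X}=L|_{\mathscr Y}$. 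The same depth argument (depth $\ge 2$ along $\mathscr Y$ from the regular sequence) gives $\calHom(\mathscr I_{\mathscr Y},L^{-1})\cong L^{-1}$.

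Feeding these into the local-to-global spectral sequence for $\Ext$ produces
$$H^1(\mathscr X,L^{-1})\to\Ext^1(\mathscr I_{\mathscr Y},L^{-1})\xrightarrow{\ \rho\ }H^0(\mathscr Y,\mathscr O_{\mathscr Y})\to H^2(\mathscr X,L^{-1}).$$
Since $H^2(\mathscr X,L^{-1})=0$, the map $\rho$ is surjective, so I would select $\xi\in\Ext^1(\mathscr I_{\mathscr Y},L^{-1})$ with $\rho(\xi)=1$ and let $\mathscr F$ be the corresponding extension. The main obstacle is to verify that $\mathscr F$ is locally free of rank two, and this is precisely where the condition $\rho(\xi)=1$ is needed: on the open complement of $\mathscr Y$ one has $\mathscr I_{\mathscr Y}=\mathscr O_{\mathscr X}$ and the extension splits locally, so $\mathscr F$ is free there; at a point of $\mathscr Y$, the condition $\rho(\xi)=1$ forces the local extension class to generate $\calExt^1_{\mathrm{loc}}\cong\mathscr O_{\mathscr Y}$ as an $\mathscr O_{\mathscr Y}$-module, and direct comparison with the Koszul extension then identifies $\mathscr F$ with a locally free rank-two sheaf near $\mathscr Y$.

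With $\mathscr E:=\mathscr F\otimes L$, the extension becomes $0\to\mathscr O_{\mathscr X}\xrightarrow{\ t\ }\mathscr E\to\mathscr I_{\mathscr Y}\otimes L\to 0$. Dualizing and using $\calExt^1(\mathscr I_{\mathscr Y}\otimes L,\mathscr O_{\mathscr X})\cong\mathscr O_{\mathscr Y}$ shows that the image of $t^\vee:\mathscr E^\vee\to\mathscr O_{\mathscr X}$ equals $\mathscr I_{\mathscr Y}$, so $Z(t)=\mathscr Y$ scheme-theoretically. Comparing the Koszul sequence $0\to\det\mathscr E^\vee\to\mathscr E^\vee\to\mathscr I_{\mathscr Y}\to 0$ attached to $t$ with the original extension for $\mathscr F$ yields $\det\mathscr E=L$. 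Finally, under the further hypothesis $H^1(\mathscr X,L^{-1})=0$, the map $\rho$ becomes an isomorphism, so $\xi$ is uniquely determined by $\rho(\xi)=1$; a standard argument (using that the only endomorphisms of $L^{-1}$ and $\mathscr I_{\mathscr Y}$ compatible with the extension are scalars) then gives uniqueness of the pair $(\mathscr E,t)$ up to isomorphism.
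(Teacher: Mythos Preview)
Your proof is correct and follows essentially the same route as the paper's: both use the local-to-global $\Ext$ spectral sequence, compute $\calExt^1(\mathscr I_{\mathscr Y},L^{-1})\cong\calExt^2(\mathscr O_{\mathscr Y},L^{-1})\cong\mathscr O_{\mathscr Y}$ via the fundamental local isomorphism, lift the identity section to an extension class, and then verify local freeness of the middle term. The only cosmetic differences are that the paper works with $\Ext^1(\mathscr I_{\mathscr Y}\otimes L,\mathscr O_{\mathscr X})$ rather than $\Ext^1(\mathscr I_{\mathscr Y},L^{-1})$, proves local freeness by showing $\calExt^1(\mathscr E,\mathscr O_{\mathscr X})=0$ (citing \cite{OSS}, Lemma 5.1.2, whose proof is precisely the Koszul comparison you sketch), and obtains $\det\mathscr E\cong L$ by restriction to $\mathscr X\setminus\mathscr Y$ together with a depth/local-cohomology argument instead of your Koszul-sequence comparison.
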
 

\proof In the case when $\mathscr X$ is smooth the result is well-known
(see e.g. \cite{A}). For the lack of an appropriate reference we sketch a proof in this generality.
Let $\mathscr I_{\mathscr Y}$ denote the sheaf of ideals of $\mathscr Y$ in $O_{\mathscr X}$ and consider the spectral sequence (see e.g. \cite[Proposition (IV, 2.4]{AK}) 
$$E^{p,q}_2:=H^p(\mathscr X,\calExt_{\mathscr O_{\mathscr X}}^q(\mathscr I_{\mathscr Y}\otimes
L,\mathscr O_{\mathscr X}))\Longrightarrow
 E^n:=\Ext^n_{\mathscr O_{\mathscr X}}(\mathscr I_{\mathscr Y}\otimes
L,\mathscr O_{\mathscr X}),$$ 
which yields the exact sequence in low degrees:
\begin{equation}\label{esatta-spettrale}
0\to E_2^{1,0}\to E^1\to E_2^{0,1}\to E_2^{2,0}.
\end{equation}
Since $\mathscr Y$ is a local complete intersection in $\mathscr X$ of codimension two, 
$\calHom_{\mathscr O_{\mathscr X}}(\mathscr I_{\mathscr Y}\otimes  L,\mathscr O_{\mathscr X})\cong L^{-1}$,
so that our hypothesis that $H^2(\mathscr X,L^{-1})=0$ implies $E_2^{2,0}=0$. Thus
\eqref{esatta-spettrale} yields a canonical surjection
\begin{equation}\label{suriezione}
\Ext^1_{\mathscr O_{\mathscr X}}(\mathscr I_{\mathscr Y}\otimes L,\mathscr  O_{\mathscr X})\to
H^0(\mathscr X,\calExt_{\mathscr O_{\mathscr X}}^1(\mathscr I_{\mathscr Y}\otimes
L,\mathscr O_{\mathscr X})).
\end{equation}
On the other hand, the long exact cohomology sequence obtained by applying $\calHom_{\mathscr O_{\mathscr X}}(-,\mathscr O_{\mathscr X})$ to the short exact sequence
$0\to\mathscr I_{\mathscr Y}\otimes L\to L\to\mathscr O_{\mathscr Y}\otimes L\to 0$ 
immediately yields 
\begin{equation}\label{isomf}\calExt_{\mathscr O_{\mathscr X}}^1(\mathscr I_{\mathscr Y}\otimes L,\mathscr O_{\mathscr X})\cong 
\calExt_{\mathscr O_{\mathscr X}}^2(\mathscr O_{\mathscr Y}\otimes L,\mathscr O_{\mathscr X}).\end{equation}
Since $\mathscr Y$ is a local complete intersection in $\mathscr X$, by \cite{AK}, 
Theorem (I, 4.5) we infer that there is an isomorphism
\begin{equation}\label{firenze}\calExt_{\mathscr O_{\mathscr X}}^2(\mathscr O_{\mathscr Y}\otimes
L,\mathscr O_{\mathscr X})\cong\det(N_{\mathscr Y|\mathscr X})\otimes L^{-1}|\mathscr Y\cong\mathscr O_{\mathscr Y},
\end{equation}
because by assumption,
$L|_{\mathscr Y}=\det(N_{\mathscr Y|\mathscr X})$.  Therefore the  target of the surjection
\eqref{suriezione} becomes $H^0(\mathscr Y,\mathscr O_{\mathscr Y})=\Hom_{\mathscr O_{\mathscr Y}}(\mathscr O_{\mathscr Y},\mathscr O_{\mathscr Y})$. Hence the
identity map lifts  to an extension
\begin{equation}\label{estensione}
\begin{CD}0@>>>\mathscr O_{\mathscr X}@>t>>\mathscr E@>>>\mathscr I_{\mathscr Y}\otimes L@>>>0,\end{CD}
\end{equation}
which produces a rank-two coherent sheaf $\mathscr E$ on $\mathscr X$. We shall prove that $\mathscr E$ is actually locally free. To show this it is enough to prove that 
\begin{equation}\label{estensione2}\calExt_{\mathscr O_{\mathscr X}}^1(\mathscr E,\mathscr O_{\mathscr X})=0.\end{equation}
Indeed, the problem being local this follows from \cite[Lemma 5.1.2 and its proof, pages 98--99]{OSS}. 
To prove  \eqref{estensione2}, observe that  \eqref{estensione} yields the following exact sequence  
$${\begin{CD}\mathscr O_{\mathscr X}\cong\calHom_{\mathscr O_{\mathscr X}}(\mathscr O_{\mathscr X},\mathscr O_{\mathscr X})@>{\varphi}>>
\calExt_{\mathscr O_{\mathscr X}}^1(\mathscr I_{\mathscr Y}\otimes L,\mathscr O_{\mathscr X})@>>>\calExt_{\mathscr O_{\mathscr X}}^1(\mathscr E,\mathscr O_{\mathscr X})@>>>0\end{CD}}.$$
Thus \eqref{estensione2} is equivalent to the surjectivity of the map $\varphi$. 
But by \eqref{isomf} and \eqref{firenze} we get an isomorphism
$\calExt_{\mathscr O_{\mathscr X}}^1(\mathscr I_{\mathscr Y}\otimes L,\mathscr O_{\mathscr X})\cong
\mathscr O_{\mathscr Y}.$  
It follows  that  the map $\varphi$ is 
identified with the canonical surjection $\mathscr O_{\mathscr X}\to\mathscr O_{\mathscr Y}$. This finishes the existence part of the theorem.

Now the condition  that $\det(\mathscr E)\cong L$  follows immediately. Indeed, restricting the exact sequence \eqref{estensione} to $\mathscr X\setminus \mathscr Y$ and taking the determinant we get
that $\det(\mathscr E)|_{\mathscr X\setminus \mathscr Y}\cong L|_{\mathscr X\setminus \mathscr Y}$. Since the ideal of $\mathscr Y$ in $\mathscr X$ is locally generated by a regular sequence of length $2$, a standard argument based on Local Cohomology \cite{LC}  implies that  $\det(\mathscr E)\cong L$. Moreover, the condition that $Z(t)= Y$  follows directly from \eqref{estensione} and from the definition of the zero locus.

Finally assume that  $H^1(\mathscr X,L^{-1})=0$. This means that $E^{1,0}_2=0$ in the exact sequence \eqref{esatta-spettrale}, hence the surjective map
 \eqref{suriezione} is also injective. This yields the uniqueness 
of $\mathscr E$ (up to isomorphism), concluding the proof of the theorem.
\qed

\begin{rem} {\em The above proof of Theorem \ref{serre-locale} came out from a discussion  with Enrique Arrondo.}\end{rem}

The main result of this section is a sort of converse of Proposition \ref{a0} for rank two vector bundles on small-codimensional submanifolds in $\mathbb P^N$.
Precisely, we prove the following:

\begin{theorem}\label{a} Let $X\subset\mathbb P^N$ be a smooth $n$-dimensional
subvariety, with $n\geq\frac{N+3}{2}\geq 4$.  Let 
$E$ be a rank two vector bundle on $X$ which satisfies condition
$({\bf P}_E^2)$ above. Then $E$ can be extended  to a rank two vector bundle $\mathscr E$ on  the 
first infinitesimal neighborhood $X(1)$ of $X$ in $\mathbb P^N$.
\end{theorem}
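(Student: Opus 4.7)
The plan is to extend $E$ in two stages. First, combine condition $(\mathbf{P}_E^2)$ with Theorem \ref{t6} to produce, for $l\gg 0$, a codimension two local complete intersection subscheme $Y'\subset X(1)$ lifting the zero locus $Y$ of a general section of $E(l)$. Then apply the generalized Hartshorne--Serre correspondence (Theorem \ref{serre-locale}) to $Y'\subset X(1)$ to obtain a rank two vector bundle $\mathscr E_0$ on $X(1)$, and verify via the uniqueness part of Hartshorne--Serre on $X$ that $\mathscr E_0|_X\cong E(l)$. Untwisting, $\mathscr E:=\mathscr E_0\otimes\mathscr O_{X(1)}(-l)$ is the desired extension of $E$.

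The setup is routine. By Barth--Larsen (Theorem \ref{t4}), since $n\geq\frac{N+3}{2}\geq\frac{N+2}{2}$, $\Pic(X)\cong\mathbb Z$ is generated by $\mathscr O_X(1)$; write $\det E=\mathscr O_X(d)$ and set $e:=d+2l$, so $\det(E(l))=\mathscr O_X(e)$. Condition $(\mathbf{P}_E^2)$ supplies, for $l$ large, a section $s\in H^0(X,E(l))$ whose zero locus $Y$ is smooth of codimension two and such that the normal bundle sequence of $Y\subset X\subset\mathbb P^N$ splits, and then Theorem \ref{t6} produces a codimension two local complete intersection $Y'\subset X(1)$ with $Y'\cap X=Y$ scheme-theoretically.

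To apply Theorem \ref{serre-locale} with $\mathscr X=X(1)$, $\mathscr Y=Y'$ and $L:=\mathscr O_{X(1)}(e)$, two conditions need verification. The vanishings $H^i(X(1),L^{-1})=0$ for $i=1,2$ follow, for $l$ sufficiently large, by breaking up along
$$0\to N^\vee_{X|\mathbb P^N}(-e)\to\mathscr O_{X(1)}(-e)\to\mathscr O_X(-e)\to 0$$
and applying Kodaira--Le Potier (Theorem \ref{t2}) to the ample rank-$(N-n)$ bundle $N_{X|\mathbb P^N}(e)$ on $X$ and to the ample line bundle $\mathscr O_X(e)$; the required index inequality is $2n-N\geq 3$, which is exactly the hypothesis. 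The more delicate point is the identification $L|_{Y'}=\det N_{Y'|X(1)}$. Both line bundles restrict on $Y$ to $\det N_{Y|X}=\mathscr O_Y(e)$, so it suffices to prove that $\Pic(Y')\to\Pic(Y)$ is injective. Via the exponential sequence attached to the square-zero extension $0\to\mathscr K\to\mathscr O_{Y'}\to\mathscr O_Y\to 0$, this kernel is a quotient of $H^1(Y,\mathscr K)$. A local calculation, using $\mathscr I_X^2=0$ in $\mathscr O_{X(1)}$ and the fact that local generators $f_1,f_2$ of $\mathscr I_{Y'}$ restrict to a regular sequence in $\mathscr O_X$ (whence $\mathscr I_X\cap(f_1,f_2)=\mathscr I_X\cdot(f_1,f_2)$), canonically identifies $\mathscr K$ with $N^\vee_{X|\mathbb P^N}|_Y$. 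Since $N_{X|\mathbb P^N}|_Y$ is ample of rank $N-n$ on the $(n-2)$-dimensional $Y$ and $(n-2)-(N-n)=2n-N-2\geq 1$, Kodaira--Le Potier again yields $H^1(Y,N^\vee_{X|\mathbb P^N}|_Y)=0$.

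With both hypotheses established, Theorem \ref{serre-locale} produces a rank two bundle $\mathscr E_0$ on $X(1)$ with $\det\mathscr E_0=\mathscr O_{X(1)}(e)$ and a section $t$ having $Z(t)=Y'$. Restricting $(\mathscr E_0,t)$ to $X$ yields a rank two bundle with a section vanishing on $Y'\cap X=Y$ and having determinant $\mathscr O_X(e)$; the uniqueness clause of Hartshorne--Serre on $X$, applicable because $H^1(X,\mathscr O_X(-e))=0$, identifies this pair with $(E(l),s)$, so $\mathscr E_0|_X\cong E(l)$. Hence $\mathscr E:=\mathscr E_0\otimes\mathscr O_{X(1)}(-l)$ extends $E$ to $X(1)$. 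The principal technical obstacle is the identification $\mathscr K\cong N^\vee_{X|\mathbb P^N}|_Y$ and the resulting injectivity of $\Pic(Y')\to\Pic(Y)$; once that is in place, everything else is a direct application of the tools recalled in Section 1.
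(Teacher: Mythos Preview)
Your proof is correct and follows essentially the same strategy as the paper: produce $Y'\subset X(1)$ from condition $(\mathbf{P}_E^2)$ via Theorem~\ref{t6}, apply the generalized Hartshorne--Serre correspondence on $X(1)$ with $L=\mathscr O_{X(1)}(e)$, and then invoke the uniqueness clause of Hartshorne--Serre on $X$ to identify the restriction with $E(l)$. The only differences are technical: you justify $H^i(X(1),\mathscr O_{X(1)}(-e))=0$ via Kodaira--Le Potier rather than Serre-type vanishing, you give a direct local computation for $\mathscr K\cong N^\vee_{X|\mathbb P^N}|_Y$ where the paper cites \cite[Remark~1.2\,ii)]{Rep}, and you streamline the Picard-group analysis by proving only the injectivity of $\Pic(Y')\to\Pic(Y)$ rather than analyzing the full commutative square; none of this changes the substance of the argument. (One small point left implicit: the uniqueness step on $X$ uses that $Y$ is connected, which holds since $E(l)$ is ample for $l\gg 0$ and Theorem~\ref{t3} applies.)
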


\begin{proof} Assume first $n\geq 5$. It is clear that $E$ extends to a (rank-two) vector bundle on $X(1)$ if and only if $E(l)$ does, so that we can replace 
$E$ by a sufficiently high twist $E(l)$; in particular we may assume that $E$ is ample and generated by its global
sections. Since $X$ is in the range of Barth-Larsen theorem (Theorem \ref{t4}), its Picard
group is generated by the  class of $\mathscr O_X(1)$. In particular, since $E$ is ample, there exists an $m>0$ such that $\det(E)\cong{\mathscr O}_X(m)$. Replacing again $E$ by $E(l)$ with $l\gg 0$ if necessary, we may also assume that $m\gg 0$. Then the exact sequence
$$0\to N_{X|\mathbb P^N}^\vee\to\mathscr O_{X(1)}\to\mathscr O_X\to 0$$
yields the cohomology sequences ($i=1,2$)
$$H^i(N_{X|\mathbb P^N}^\vee(-m))\to H^i(\mathscr O_{X(1)}(-m))\to H^i(\mathscr O_X(-m)).$$
By \cite[\'Expos\'e XII, Corollaire 1.4]{Gro},  the first and the last vector space are zero for $i=1,2$ because $X$ is smooth of dimension $\geq 3$,  $N_{X|\mathbb P^N}$ is a vector bundle, and $m\gg 0$. Therefore we get:
\begin{equation}\label{secondo-vanishing}
H^i({\mathscr O}_{X(1)}(-m))=0 \;\;\text{for}\;\;i=1,2\;\;\text{and}\;\;m\gg 0.
\end{equation}
(Alternatively, a standard small argument  shows that the projective scheme $X(1)$ is locally Cohen-Macaulay, and then
\eqref{secondo-vanishing} follows directly from \cite[\'Expos\'e XII, Corollaire 1.4]{Gro}.) 

Replacing $E$ by $E(l)$ with $l\gg 0$, condition $({\bf P}_E^2)$ implies that there is a section $s\in H^0(X,E)$ whose zero locus $Y:=Z(s)$ is a smooth $2$-codimensional subvariety of $X$, and the canonical exact sequence \eqref{normali} splits.
Moreover, by Theorem \ref{t3} $Y$ is also connected (cf. also a subsequent more general connectivity theorem of Fulton-Lazarsfeld \cite{FL}).

Now by condition $({\bf P}_E^2)$ again and Theorem \ref{t6}, there exists a $2$-codimensional local complete intersection subscheme
$Y'$ of $X(1)$ such that $Y'\cap X=Y$ scheme-theoretically (i.e. $\mathscr I_{Y'}+\mathscr I_X=\mathscr I_Y$). We will show that $Y'$ is the zero locus of a section of a a rank two vector bundle on $X(1)$ by applying Theorem \ref{serre-locale}, with $\mathscr X:=X(1)$ and $\mathscr Y:=Y'$. 

In order to do this we will first show that $\det(N_{Y'|X(1)})$ 
extends to a line bundle $L$ on $X(1)$ such that
$H^i(X(1),L^{-1})=0$ for $i=1,2$. 
In this sense, consider the following commutative square of restriction maps
\begin{equation}\label{Picard}
\begin{CD}
\Pic(X(1)) @>\alpha>> \Pic(X) \\
@V\beta VV @VV\gamma V \\
\Pic(Y')@>\delta>>\Pic(Y).\\
\end{CD}
\end{equation}

%\medskip

\noindent {\em Claim.} If $n\geq 5$, all the maps in diagram \eqref{Picard} are isomorphisms, and if  $n=4=\frac{N+3}{2}$ (i.e. $X$ is a smooth hypersurface in $\mathbb P^5$), $\alpha$ is an isomorphism and $\gamma$, $\beta$ and $\delta$ are injective. 

\medskip

Let us prove the claim. The fact that $\alpha$ is an isomorphism if $n\geq 4$ follows immediately from the truncated exponential exact sequence 
$$0\to N^\vee_{X|\mathbb P^N}\to{\mathscr O}^*_{X(1)}\to{\mathscr O}^*_X\to1$$
and from  Theorem \ref{t2} (which implies in particular that $H^i(X,N^\vee_{X|\mathbb P^N})=0$, $i=1,2$, because $n\geq\frac{N+3}{2}\geq 4$). Here $\mathscr O^*_Z$ denotes the sheaf of multiplicative groups of regular nowhere vanishing functions on a scheme $Z$. 
Theorem \ref{t3} implies that the map $\gamma$ is an isomorphism because $n-\rank(E)=n-2\geq 4-2=2$.

Assume first $n\geq 5$.
At this point, since $\alpha$ and $\gamma$ are isomorphisms, the commutative diagram \eqref{Picard} implies that $\beta$ is injective and $\delta$ surjective. 
Therefore to finish the proof of the claim it is enough to show that the map $\delta$ is injective.
To do this, since the ideal sheaf of $Y$ in $Y'$ is 
square-zero, one still has the following truncated exponential sequence
$$0\to N_{Y|Y'}^{\vee}\to{\mathscr O}^*_{Y'}\to{\mathscr O}^*_Y\to 1.$$
On the other hand, by \cite[Remark 1.2 ii)]{Rep},  $N_{Y|Y'}\cong N_{X|\mathbb P^N}|_{Y}$, and in particular, $N_{Y|Y'}$ is an ample vector bundle because $N_{X|\mathbb P^N}$ is so. Therefore by Theorem \ref{t2} we get
$$H^1(Y,N_{Y|Y'}^{\vee})\cong H^1(Y,N^\vee_{X|\mathbb P^N}|_Y)\newline=0,$$ because $N_{X|\mathbb P^N}$ is ample, $\dim Y=n-2$, 
$\rank(N_{Y|Y'})=N-n$ and $N\leq 2n-3$.  Then the exact sequence
$$0=H^1(Y,N_{Y|Y'}^{\vee})\to\Pic(Y')\to\Pic(Y)$$
implies that $\delta$ is injective. 

If instead $n=4=\frac{N+3}{2}$ 
the injectivity of $\gamma$ follows from the last part of Theorem \ref{t3}.
The proof of the injectivity of $\delta$ when $n\geq 5$ works also if $n=4$. 
The claim is proved.

\medskip

Now, as $\dim X\geq\frac{N+3}{2}$, by Theorem \ref{t4}  the map $\Pic(\mathbb P^N)\to\mathbb\Pic(X)$ is an isomorphism.
Since $\mathscr I_{Y'}\subset\mathscr I_Y$ and $N^\vee_{Y'|X(1)}=\mathscr I_{Y'}/\mathscr I_{Y'}^2$ and $N^\vee_{Y|X}=\mathscr I_{Y}/\mathscr I_{Y}^2$ are vector bundles of the same rank,  $N_{Y'|X(1)}|_{Y}\cong N_{Y|X}=E|_{Y}$, hence   $\det(N_{Y'|X(1)})|_Y\cong\det(E)|_Y=\mathscr O_Y(m)$.  Since by the above claim  the map $\delta$ is injective (even an isomorphism if $n\geq 5$), the we get $\det(N_{Y'|X(1)})\cong {\mathscr O}_{Y'}(m)$, hence $L:=\mathscr O_{X(1)}(m)$ is the unique extension (up to isomorphism) of $\det(N_{Y'|X(1)})$ on $X(1)$. Then by \eqref{secondo-vanishing} we have
$H^i(X(1),L^{-1})=H^1(X(1),\mathscr O_{X(1)}(-m))=0$ for $i=1,2$ and for $m\gg0$.

Then by Theorem \ref{serre-locale}  applied to  $\mathscr X:=X(1)$ and $\mathscr Y=Y'$, there is  a  pair  
$(\mathscr E,t)$, with $\mathscr E$ a vector bundle of rank $2$ on $X(1)$ and a global section $t\in H^0(X(1),\mathscr E)$, uniquely determined up to isomorphism, such that:

 i) $\det(\mathscr E)=L$, and

 ii) $Z(t)=Y'$, i.e. the zero locus of $t$ is $Y'$ (scheme-theoretically).

Set $E':=\mathscr E|_X$ and $s':=t|_X\in H^0(X,E')$. Clearly, $\det(E')\cong\mathscr O_X(m)\cong\det(E)$. Moreover, as $\,Y'\cap X=Y$ (scheme-theoretically) we infer that $Z(s')=Y$. As $\det(N_{Y|X})\cong N_{Y|X}=\det(E|_Y)\cong\mathscr O_X(m)$, $\det(N_{Y|X})$ extends to  $\mathscr O_X(m)$  with $m>0$. Then a Serre
 vanishing we get $H^i(X,\mathscr O_X(-m))=0$ for $i=1,2$ and for $m\gg0$. In conclusion, $\det(E)$ and $\det(E')$ extend both on $X$ to $\mathscr O_X(m)$ and $Z(s)=Z(s')=Y$. Then by the uniqueness  part of Theorem \ref{serre-locale} there is an isomorphism 
$\varphi\colon E\to E'$ of vector bundles such that $\varphi(s)=s'$.  This implies that $\mathscr E|_{X}\cong E$, i.e. $\mathscr E$ is an infinitesimal extension of $E$.
 \qed\end{proof} 

\begin{rem}\label{pros} 
{\em A careful look at the proof of Theorem \ref{a} shows that this result is still true if one replaces condition $({\bf P}_E^2)$ above on the triple 
$(\mathbb P^N,X,E)$, with $r=\rank(E)=2$,  
 by the following (slightly) weaker one:}
\begin{enumerate}
\item[$({\bf P}_E^2)'$] There exists a  sequence of positive integers $l_0< l_1<l_2<\cdots$  such that for every $i\geq 0$ there exists a section 
$s_i\in H^0(E(l_i))$
whose zero locus $Y_i:=Z(s_i)$ is a smooth and $2$-codimensional in $X$ such that the following canonical exact sequence 
$$0\to N_{Y_i|X}=E(l)|_{Y_i}\to N_{Y_i|\mathbb P^N}\to N_{X|\mathbb P^N}|_{Y_i}\to 0$$
splits.
\end{enumerate}
\end{rem}

\section{Examples of infinitesimally non extendable  vector bundles}

Consider the Grassmann variety $\mathbb G(k,m)$ of
$k$-dimensional linear subspaces of $\mathbb P^m$, with $m\geq 3$ and
$1\leq k\leq m-2$ (hence $\mathbb G(k,m)$ is not a projective space). Then $\dim \mathbb G(k,m)=(k+1)(m-k)$. Let
$E$ denote the universal quotient bundle  of $\mathscr O_{\mathbb G(k,m)}^{\oplus m+1}$ (of rank $m-k$). 
Fix an arbitrary projective embedding $\mathbb G(k,m)\hookrightarrow\mathbb P^N$ (for example, the Pl\"ucker embedding
$i\colon X\hookrightarrow\mathbb P^{\binom{m+1}{k+1}-1}$), and denote by $X$ the image of $\mathbb G(k,m)$ in $\mathbb P^N$. 

\medskip

In this section we prove the following result:

\begin{theorem} \label{c} Under the above notation and hypotheses  the universal quotient vector bundle $E$ of $X\cong\mathbb G(k,m)$ $($with $1\leq k\leq m-2)$ cannot be extended to a vector bundle on the first infinitesimal
neighborhood $X(1)$ of $X$ in $\mathbb P^{N}$. \end{theorem}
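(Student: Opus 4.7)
The plan is a proof by contradiction. Suppose $E$ extends to a rank $(m-k)$ vector bundle $\mathscr{E}$ on $X(1)$; the strategy is to derive from this a scheme-theoretic retraction $f\colon X(1)\to X$, which will contradict Van de Ven's theorem (Theorem~\ref{t5}) since $\mathbb{G}(k,m)$ with $1\le k\le m-2$ is not a linear subspace of $\mathbb{P}^N$.

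The first step is to lift the tautological surjection $\varphi\colon\mathcal{O}_X^{\oplus(m+1)}\twoheadrightarrow E$, coming from the $m+1$ canonical generating sections of the universal quotient, to a morphism of $\mathcal{O}_{X(1)}$-modules $\tilde\varphi\colon\mathcal{O}_{X(1)}^{\oplus(m+1)}\to\mathscr{E}$ which restricts to $\varphi$ on $X$. To lift each generating section $\sigma_i\in H^0(X,E)$ I would use the long cohomology sequence attached to the short exact sequence
$$0\longrightarrow E\otimes N^\vee_{X|\mathbb{P}^N}\longrightarrow\mathscr{E}\longrightarrow E\longrightarrow 0$$
on $X(1)$, which identifies the obstruction to lifting as an element of $H^1(X,E\otimes N^\vee_{X|\mathbb{P}^N})$. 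The crucial input here, corresponding to the vanishing alluded to in the acknowledgment, is
$$H^1(X,E\otimes N^\vee_{X|\mathbb{P}^N})=0,$$
which follows from Theorem~6.11 of Ottaviani--Rubei \cite{Ott}. Granted this, every $\sigma_i$ lifts, and Nakayama applied to the square-zero ideal $\mathcal{I}_X/\mathcal{I}_X^2\subset\mathcal{O}_{X(1)}$ forces $\tilde\varphi$ to be surjective, with locally free kernel $\tilde{S}$ of rank $k+1$.

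Next, the surjection $\tilde\varphi$ onto the locally free sheaf $\mathscr{E}$ of rank $m-k$ corresponds, via the universal property of the Grassmannian, to a morphism of schemes $f\colon X(1)\to\mathbb{G}(k,m)=X$. Because $\tilde\varphi|_X=\varphi$ corresponds to the identity map of $X=\mathbb{G}(k,m)$, we have $f|_X=\id_X$, so $f$ is a retraction of the closed immersion $X\hookrightarrow X(1)$. The existence of such a retraction forces the tangent bundle sequence to split: the canonical isomorphism $\Omega_{X(1)}|_X\cong\Omega_{\mathbb{P}^N}|_X$ gives $\mathcal{T}_{X(1)}|_X\cong T_{\mathbb{P}^N}|_X$, and the differential $df$ restricted to $X$ therefore produces a retraction $T_{\mathbb{P}^N}|_X\to T_X$ of the canonical inclusion $T_X\hookrightarrow T_{\mathbb{P}^N}|_X$. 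Hence
$$0\to T_X\to T_{\mathbb{P}^N}|_X\to N_{X|\mathbb{P}^N}\to 0$$
splits, contradicting Van de Ven's theorem~\ref{t5} since $\mathbb{G}(k,m)$ with $1\le k\le m-2$ is not a linear subspace of any projective space.

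The principal obstacle is the cohomological vanishing $H^1(X,E\otimes N^\vee_{X|\mathbb{P}^N})=0$ invoked in the first step, which must hold for an arbitrary projective embedding $X=\mathbb{G}(k,m)\hookrightarrow\mathbb{P}^N$. Since $\Pic(\mathbb{G}(k,m))=\mathbb{Z}$ is generated by the Plücker class, every such embedding is induced by a subsystem of sections of $\mathcal{O}_X(d)$ for some $d\ge 1$, and the conormal bundle $N^\vee_{X|\mathbb{P}^N}$ is then expressible in terms of the restriction of $\Omega_{\mathbb{P}^N}(1)$ and explicit equivariant bundles on the Grassmannian. The required vanishing therefore reduces to a concrete cohomology computation for natural bundles on $\mathbb{G}(k,m)$, of exactly the sort covered by Theorem~6.11 of Ottaviani--Rubei \cite{Ott}.
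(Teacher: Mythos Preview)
Your proposal is correct and follows essentially the same route as the paper: assume an extension $\mathscr E$, use the vanishing $H^1(X,E\otimes N^\vee_{X|\mathbb P^N})=0$ to lift the $m+1$ generating sections, apply Nakayama and the universal property of $\mathbb G(k,m)$ to get a retraction $X(1)\to X$, and conclude via Van de Ven. Your direct argument that the retraction splits the tangent sequence (via $\Omega_{X(1)}|_X\cong\Omega_{\mathbb P^N}|_X$) is exactly the content of the result the paper cites as \cite{MP} or \cite[Lemma 6.2]{B}.

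The only point where the paper is more explicit is the proof of the vanishing itself. Rather than appealing directly to \cite[Theorem 6.11]{Ott}, the paper first reduces $H^1(E\otimes N^\vee_{X|\mathbb P^N})=0$ to the equivalent statement $H^0(E\otimes F)=0$, where $F=\mathscr P^1(\mathscr O_X(1))(-1)$ is the (twisted) first principal parts bundle, by chasing the diagram built from the conormal sequence and the restricted Euler sequence; the intermediate step uses $H^i(E(-1))=0$ for $i=0,1$ from \cite{Fu},\cite{K}. Only then is Ottaviani--Rubei invoked, via the observation that the coboundary $H^0(E)\to H^1(E\otimes\Omega^1_X)$ is nonzero and $H^0(E)$ is irreducible. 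Your final paragraph gestures at the same reduction but does not carry it out; since this is, as you say, the principal obstacle, it would strengthen the write-up to make that step as concrete as the paper does.
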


\begin{proof} Assume by way of contradiction that there
would exist a vector bundle $\mathscr E$ on $X(1)$ such that
$\mathscr E|_{X}\cong E$. Tensoring by $\mathscr E$ the exact sequence
$$0\to N_{X|\mathbb P^N}^{\vee}\to\mathscr O_{X(1)}\to\mathscr O_{X}\to 0$$ 
and taking into account that
$\mathscr E\otimes N_{X|\mathbb P^N}^{\vee}\cong E\otimes
N_{X|\mathbb P^N}^{\vee}$ we get the exact sequence
\begin{equation}\label{d1}
0\to E\otimes N_{X|\mathbb P^N}^{\vee}\to\mathscr E\to E\to 0.
\end{equation}

Now assume for the moment that the following condition holds true
\begin{equation}\label{d}
H^1(X,E\otimes N_{X|\mathbb P^N}^{\vee})=0.
\end{equation}

Then \eqref{d1} and \eqref{d} imply that the restriction map $H^0(X(1),\mathscr E)\to H^0(X,E)$ is surjective. Considering the canonical surjection $\varphi\colon\mathscr O_{X}^{\oplus (m+1)}\twoheadrightarrow E$ given by $(s_0,s_1,\ldots,s_m)\in H^0(X,E)^{\oplus (m+1)}$, it follows that there exists an $(m+1)$-uple 
$(s'_0,s'_1,\ldots,s'_m)\in H^0(X(1),\mathscr E)^{\oplus (m+1)}$ such that $s'_i|X=s_i$, $i=0,1,\ldots,m$. Since $\varphi$ is surjective, the sections $s_0,s_1,\ldots,s_m$ generate $E$, hence by
Nakayama's Lemma the sections $s'_0,s'_1,\ldots,s'_m$
generate
$\mathscr E$. In other words, the surjection $\varphi$
lifts to a surjection $\varphi'\colon\mathscr O_{X(1)}^{\oplus (m+1)}\twoheadrightarrow\mathscr E$. Then by the
universal property of the Grassmann variety $X=\mathbb  G(k,m)$
 there exists a morphism of schemes
$\pi\colon X(1)\to X$ such that $\pi^*(E)=
\mathscr E$. Since $\mathscr E|_{X}=E$ it follows that $\pi$ is a retraction of the canonical embedding $X\hookrightarrow X(1)$.  By a well known result (see \cite{MP}, or also \cite[Lemma 6.2]{B}), this latter fact is equivalent with the splitting of the canonical exact sequence of tangent and normal bundles
\begin{equation}\label{e}0\to T_{X}\to T_{\mathbb P^N}|_X\to N_{X|\mathbb P^N}\to 0.\end{equation}
By Theorem \ref{t6} of Van de Ven, the splitting of \eqref{e}
implies that $X$ is a linear subspace of
$\mathbb P^N$, which is a contradiction (otherwise
$X$ would be isomorphic to a projective space). 

Now we prove \eqref{d}. We first claim that  \eqref{d} is equivalent with the following vanishing:
\begin{equation}\label{d0} H^0(E\otimes F)=0,\end{equation}
where $F$ is is defined in the following commutative diagram with exact rows and columns:
$${\small\small
\begin{CD}
{}   {}@. @. {0} @. {0} \\
 @.@. @VVV @VVV\\
{0} @>>> N_{X|\mathbb P^{N}}^{\vee} @>>> \Omega^1_{\mathbb P^{N}}|X @>>> {}\Omega^1_X @>>> {0}\\
 @. @V\id VV @VVV @VVV  \\
{0} @>>> N_{X|\mathbb P^{N}}^{\vee}  @>\varphi>> \mathscr O_X(-1)^{\oplus (N+1)} @>>>F:=\Coker(\varphi) @>>>{0}\\
 @.@. @VVV @VVV\\
 {} {}@.@. \mathscr O_X@>\id>> \mathscr O_X\\
@. @. @VVV @VVV\\
{}  {}@. @. {0} @. {0}
\end{CD}\;\;.}$$
\noindent The first row  in this diagram is the conormal sequence of $X$ and the second column is the Euler sequence restricted to $X$. Note that the sheaf $F$ coincides with ${\mathscr P}^1(\mathscr O_X(1))(-1)$, where ${\mathscr P}^1(\mathscr O_X(1))$ is the sheaf of first-order  principal
parts of $\mathscr O_X(1)$. 
Tensoring this diagram by $E$ we get the following commutative diagram with exact rows and columns
\begin{equation}\label{XXX}
\begin{CD}
{}   {}@. @. {0} @. {0} \\
 @.@. @VVV @VVV\\
{0} @>>> E\otimes N_{X|\mathbb P^{N}}^{\vee} @>>>E\otimes \Omega^1_{\mathbb P^{N}}|X @>>> {}E\otimes \Omega^1_X @>>> {0}\\
 @. @V\id VV @VVV @VVV  \\
{0} @>>> E\otimes N_{X|\mathbb P^{N}}^{\vee}  @>>> E(-1)^{\oplus (N+1)} @>>> E\otimes F@>>>{0}\\
 @.@. @VVV @VVV\\
 {} {}@.@. E@>\id>> E\\
@. @. @VVV @VVV\\
{}  {}@. @. {0} @. {0}
\end{CD}
\end{equation}

\noindent The second row of \eqref{XXX} yields the cohomology sequence
$$H^0(X,E(-1)^{\oplus (N+1)})\to H^0(X,E\otimes F)\to
  H^1(E\otimes N^{\vee}_{X|\mathbb P^N})\to H^1(E(-1)^{\oplus (N+1)}).$$
  
 \noindent By \cite[Corollary (4.11) and Theorem (4.17)]{Fu} (whose proofs are based on some vanishing results for flag manifolds of Kempf \cite{K}) we have $H^i(E(-1)^{\oplus(N+1)})=0$ for $i=0,1$ (since
$\Pic(X)\cong\mathbb Z$). 
Thus the canonical map $\delta\colon H^0(E\otimes F)\to H^1(X,E\otimes N_{X|\mathbb P^N}^{\vee})$  is an isomorphism, which proves the claim. 

Therefore it will be sufficient to prove \eqref{d0}.
But, as Giorgio Ottaviani kindly explained to me,  \eqref{d0} is a special case of a general result of Ottaviani-Rubei (see  
\cite[Theorem 6.11]{Ott}). Indeed, considering  the coboundary map $\delta'\colon H^0(E) \to H^1(E\otimes\Omega^1_X)$ associated to the last column of diagram \eqref{XXX}  as a quiver we infer that $\delta'\neq 0$. Moreover since $H^0(E)$ is the standard la representation (and hence irreducible), this implies that $H^0(E\otimes F)=0$.  Alternatively, the fact that $H^0(E)$ is irreducible was proved directly in \cite{Weh}. In this way the proof of Theorem \ref{c} is complete. 
\qed\end{proof}

\begin{rem} {\em In the special case of  Pl\"ucker embedding $X=\mathbb G(1,3)\hookrightarrow\mathbb P^5$, the normal bundle $N_{X|\mathbb P^5}$ is isomorphic to $\mathscr O_X(1)$, hence the vanishing \eqref{d} becomes $H^1(X,E(-1))=0$, and this follows directly from \cite[Corollary (4.11) and Theorem (4.17)]{Fu}.}\end{rem}

\begin{examp}[Submanifods of $\mathbb P^N$ of dimension $\frac{N+3}{2}$]\label{r2} {\em For every $m\geq 3$ consider the Pl\"ucker embedding $i'_m\colon \mathbb G(1,m)\hookrightarrow\mathbb P^{\binom{m+1}{2}-1}$ of the Grassmann variety of lines in $\mathbb P^m$, and set $X'_m:=i'_m(\mathbb G(1,m))$. As is well-known $X'_m$ is a $4$-defective subvariety of $\mathbb P^{\binom{m+1}{2}-1}$, meaning that there is a linear projection $\pi_{L_m}\colon\mathbb P^{\binom{m+1}{2}-1}\dasharrow\mathbb P^{4m-7}$ of center a linear subspace $L_m$ of dimension $\binom{m+1}{2}-(4m-7)-2$ of $\,\mathbb P^{\binom{m+1}{2}-1}$ which does not intersect $X'_m$ such that the restriction $\pi_{L_m}|X'_m\colon X'_m\to\pi_{L_m}(X'_m)$ is a biregular isomorphism (see \cite[Exercise 11.27, page 145]{Harr}). 
Therefore we may consider the projective embedding
$i_m:=\pi_{L_m}\circ i'_m\colon \mathbb G(1,m)\hookrightarrow\mathbb P^{4m-7}$. If we set $X_m:=\pi_{L_m}(X'_m)$, $n:=\dim X_m=\dim\mathbb G(1,m)=2(m-1)$ and $N:=4m-7$, it follows that $X_m$ is, via the projective embedding $i_m$, an $n$-dimensional closed subvariety of $\mathbb P^N$, with $n=\frac{N+3}{2}$. 
If $m=3$ or if $m=4$ the projective embeddings $i_m$ and  $i'_m$ coincide, i.e. $i_m$ is one of the Pl\"ucker embeddings $i'_3\colon\mathbb G(1,3)\hookrightarrow\mathbb P^5$ or
$i'_4\colon\mathbb G(1,4)\hookrightarrow\mathbb P^9$. Conversely, if $i_m$ and $i'_m$ coincide then $m=3$ or $m=4$.
In particular, Theorem \ref{a} applies to every rank two vector bundle on the submanifold $X_m$ of dimension $2(m-1)$ of $\mathbb P^{4m-7}$, with $m\geq 3$.}\end{examp}

\begin{corollary}\label{s} Let $X:=\mathbb G(1,m)$ be the Grassmann variety of lines in $\mathbb P^m$, with $m\geq 3$, and let $E$ be the universal rank two quotient vector bundle on $X$. Let $X\hookrightarrow\mathbb P^{4m-7}$ be any projective embedding of $X$ in $\mathbb P^{4m-7}$ $($see Example  $\ref{r2})$. Then there exists an  integer $l_0>0$ such that for every $l\geq l_0$ and for every section $s\in H^0(E(l))$  whose zero-locus 
$Y:=Z(s)$ is smooth of codimension $2$ in $X$, the exact sequence of normal bundles
$$0\to N_{Y|X}\to N_{Y|\mathbb P^N}\to N_{X|\mathbb P^N}|_Y\to0$$
never splits.\end{corollary}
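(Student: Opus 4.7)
The plan is to derive Corollary \ref{s} as a direct contrapositive of Theorem \ref{a} (in the strengthened form recorded in Remark \ref{pros}) combined with the non-extendability statement of Theorem \ref{c}.

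First I would check that the dimensional hypothesis $n\geq\tfrac{N+3}{2}\geq 4$ of Theorem \ref{a} is satisfied for the embedding $X\cong\mathbb{G}(1,m)\hookrightarrow\mathbb{P}^{4m-7}$ from Example \ref{r2}: with $n=\dim X=2(m-1)$ and $N=4m-7$ one has $\tfrac{N+3}{2}=2(m-1)=n$, and the lower bound $\tfrac{N+3}{2}\geq 4$ is equivalent to $m\geq 3$. Hence Theorem \ref{a}, together with its weakened-hypothesis variant in Remark \ref{pros}, applies to the rank-two bundle $E$ on $X$. On the other hand, Theorem \ref{c} asserts that $E$ does not extend to any vector bundle on the first infinitesimal neighborhood $X(1)$ of $X$ in $\mathbb{P}^{4m-7}$.

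Combining these two ingredients by contraposition: if condition $({\bf P}_E^2)'$ of Remark \ref{pros} were satisfied, then (the Remark \ref{pros} version of) Theorem \ref{a} would produce an extension of $E$ to $X(1)$, contradicting Theorem \ref{c}. Therefore $({\bf P}_E^2)'$ must fail. Now $({\bf P}_E^2)'$ asserts the existence of an infinite strictly increasing sequence $l_0<l_1<l_2<\cdots$ each admitting a section $s_i\in H^0(E(l_i))$ with $Z(s_i)$ smooth of codimension two and splitting normal bundle sequence; its failure is therefore equivalent to the set
$$\mathcal{A}:=\{\,l\in\mathbb{Z}_{>0}\mid\exists\,s\in H^0(E(l))\text{ with }Z(s)\text{ smooth, }\codim=2,\text{ and splitting normal sequence}\,\}$$
being finite. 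Setting $l_0:=\max(\mathcal{A})+1$ (or $l_0=1$ if $\mathcal{A}=\varnothing$) then yields the required uniform threshold: for every $l\geq l_0$ and every section $s\in H^0(E(l))$ with $Z(s)$ smooth of codimension two in $X$, the normal bundle sequence does not split.

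There is no substantial obstacle in the argument; the only subtle point worth highlighting is that one really needs the strengthened form from Remark \ref{pros} rather than the bare statement of Theorem \ref{a}. Indeed, the negation of the original $({\bf P}_E^2)$ would only furnish non-splitting sections for arbitrarily large $l$, which is strictly weaker than the uniform bound asserted by the corollary; the negation of the weaker $({\bf P}_E^2)'$ is precisely the finiteness of $\mathcal{A}$, i.e.\ the existence of a uniform $l_0$ above which no section with smooth codimension-two zero locus can give a splitting.
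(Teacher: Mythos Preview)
Your proof is correct and follows exactly the route indicated in the paper, namely combining Theorem \ref{c} with Theorem \ref{a} via Remark \ref{pros} after verifying the dimensional hypothesis $n=\frac{N+3}{2}\geq 4$. You have made explicit the point that the paper leaves implicit: that one must use the weaker condition $({\bf P}_E^2)'$ of Remark \ref{pros} (whose negation gives finiteness of the set $\mathcal A$ and hence a uniform $l_0$) rather than $({\bf P}_E^2)$ itself.
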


\proof In this case $\dim X=2(m-1)=\frac{N+3}{2}$, with $N=4m-7$. Then the corollary follows from  Theorem \ref{c} and Theorem \ref{a} via Remark \ref{pros}.\qed

\newpage

\end{document}